\documentclass[12pt]{article}
\usepackage{amssymb,amsfonts,mathrsfs,amsmath,amsthm,graphicx,times,geometry,booktabs,needspace,float,tikz}
\usepackage[hidelinks]{hyperref}
\hypersetup{pdftitle={The least signless Laplacian eigenvalue of C3,C5-free graphs},pdfauthor={Qi Zhou}}
\newtheorem{Th}{\scshape Theorem}[section]
\newtheorem{Lem}[Th]{\scshape Lemma}

\theoremstyle{remark}

\numberwithin{equation}{section}
\newcommand{\qmin}{q_{\min}}
\newcommand{\RR}{\mathbb R}
\newcommand{\FF}{\mathbb F}
\newcommand{\ones}{\mathbf1}

\newcommand{\av}[1]{\left[\!\left[#1\right]\!\right]}

\begin{document}
\title{The least signless Laplacian eigenvalue of $\{C_3,C_5\}$-free graphs}
\author{Qi Zhou\\
\small\it School of Finance and Mathematics, Huainan Normal University, Huainan, China}
\date{}
\maketitle

\noindent{\bf Abstract:} Brandt [Discrete Math. 183 (1998) 17--25] conjectured that the least signless Laplacian eigenvalue of every regular triangle-free graph of order $n$ is at most $4n/25$. Using flag algebras, Balogh, Clemen, Lidick\'y, Norin and Volec [SIAM J. Discrete Math. 37 (2023) 1173--1179] established the stronger bound $15n/94$ for all triangle-free graphs. We investigate the effect of additionally excluding pentagons and prove that every $\{C_3,C_5\}$-free graph $G$ of order $n$ satisfies $\qmin(G)<0.0569n$, without any regularity assumption. The proof combines seven-vertex flag inequalities with local Rayleigh constraints that retain the least eigenvalue throughout the counting argument. An exact integer certificate establishes the required inequality.

\medskip
\noindent{\bf Keywords:} signless Laplacian eigenvalue; odd girth; flag algebras

\medskip
\noindent{\bf AMS Subject Classifications:} 05C50

\section{Introduction}

Let $G$ be a finite simple graph of order $n\ge1$, with adjacency matrix $A$ and diagonal degree matrix $D$. Its signless Laplacian is $Q=D+A$, and its least eigenvalue is denoted by $\qmin(G)$. We write $P_k$ and $C_k$ for the path and cycle on $k$ vertices, respectively. The odd girth of $G$ is the length of a shortest odd cycle. Thus $G$ is $\{C_3,C_5\}$-free if and only if its odd girth is at least seven. We study the largest possible value of $\qmin(G)/n$ in this paper.

The variational identity
\begin{equation}\label{eq:rayleigh}
\qmin(G)=\min_{0\ne x\in\RR^{V(G)}}
\frac{\sum_{uv\in E(G)}(x_u+x_v)^2}{\sum_{v\in V(G)}x_v^2}
\end{equation}
relates this eigenvalue to bipartiteness. In particular, $\qmin(G)=0$ if and only if $G$ has a bipartite connected component. If deleting $b$ edges makes $G$ bipartite, a $\{\pm1\}$-valued test vector in \eqref{eq:rayleigh} gives $\qmin(G)\le4b/n$. The connection between the smallest adjacency eigenvalue and bipartite subgraphs was studied by Alon and Sudakov \cite{alonsudakov}. For the signless Laplacian, bounds involving bipartite subgraphs and edge deletion were developed by Desai and Rao \cite{desai}, de Lima, Oliveira, de Abreu and Nikiforov \cite{lima2011}, and Fallat and Fan \cite{fallat}. Kirkland and Paul \cite{kirklandpaul} show how vectors with small signless Laplacian Rayleigh quotients can be used to identify bipartite subgraphs. The extremal problem under clique restrictions was studied by de Lima, Nikiforov and Oliveira \cite{lima2016}.

Brandt \cite{brandt} conjectured that $\qmin(G)\le4n/25$ for regular triangle-free graphs. Balogh, Clemen, Lidick\'y, Norin and Volec \cite{bclnv} proved the stronger inequality $\qmin(G)\le15n/94$ for every triangle-free graph. Their argument first bounds the least eigenvalue by a ratio involving strong homomorphisms of paths and then bounds that ratio using flag algebras. The additional exclusion of pentagons suggests a substantially smaller upper bound. Our main result is the following.

\begin{Th}\label{th:main}
Every $\{C_3,C_5\}$-free graph $G$ of order $n\ge1$ satisfies
\begin{equation}\label{eq:main}
\qmin(G)<\frac{569n}{10000}=0.0569n.
\end{equation}
\end{Th}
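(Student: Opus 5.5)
We would follow the general strategy of \cite{bclnv}, but keep the least eigenvalue $\lambda=\qmin(G)$ inside the counting argument rather than first passing to a homomorphism ratio. Suppose, for contradiction, that $G$ is $\{C_3,C_5\}$-free of order $n$ with $\lambda\ge0.0569n$. For every vertex set we use test vectors in \eqref{eq:rayleigh} built from local structure. For a vertex $v$, let $N_1(v)$ and $N_2(v)$ be its first and second neighbourhoods. Odd girth at least seven makes $N_1(v)\cup N_2(v)$ independent-free in the right way: $N_1(v)$ is independent, $N_2(v)$ is independent, and there are no edges inside $N_1\cup\{v\}$ except the star. The vector equal to $+1$ on $\{v\}\cup N_2(v)$, $-1$ on $N_1(v)$ and $0$ elsewhere therefore has numerator $4e(N_2(v),N_3(v))+\ldots$. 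Similar vectors can be centred on an edge or on a short path, with weights chosen as parameters. Each such choice gives a linear inequality
\[\lambda\sum_u x_u^2\le\sum_{uv\in E}(x_u+x_v)^2.\]
Both sides are densities of small rooted configurations (flags) of $G$, with $\lambda/n$ appearing as a parameter. Averaging over all roots turns these into inequalities between densities of subgraphs on up to seven vertices; seven vertices are enough to see a vertex together with its neighbourhood structure up to distance three along paths.

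The key steps, in order, are these.
\begin{enumerate}
\item Fix a finite family of local test vectors, parametrised by real weights, and derive the averaged Rayleigh constraints in the form $c\,\lambda/n\cdot(\text{density polynomial})\le(\text{density polynomial})$. Here $c$ accounts for degree normalisation and lower-order terms, which are handled by working with $\lambda<\delta(G)$ and blow-up invariance.
\item Set up the flag algebra semidefinite program over $\{C_3,C_5\}$-free graphs on seven vertices. Multiply the Rayleigh constraints by nonnegative flag combinations, add sums of squares of type-$\sigma$ flag vectors, and aim for a certificate contradicting $\lambda/n\ge0.0569$.
\item Round the numerical SDP solution to exact rationals, for example by projecting onto the kernel imposed by tight extremal configurations, and verify positive semidefiniteness and the final inequality exactly with integer arithmetic.
\end{enumerate}

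Finite $n$ needs separate care, because flag algebras give asymptotic statements. We would handle it by noting that $\lambda$ is preserved under suitable scaling in balanced blow-ups. Replacing each vertex by an independent set of size $t$ multiplies $\qmin$ by $t$ and $n$ by $t$, and the odd girth is unchanged. So a counterexample yields arbitrarily large counterexamples with the same ratio, and the limiting argument applies. The strict inequality comes from slack in the certificate.

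The main obstacle is step 2. We must choose the test vectors so that the Rayleigh constraints remain linear in flag densities while still strong enough to beat $0.0569$, and the resulting SDP must remain tractable at seven vertices. We would first try vectors supported on the ball of radius two around a vertex and around an edge, with free weights optimised numerically. If those are insufficient, we would add path-centred vectors mimicking the strong homomorphism of $P_k$ used in \cite{bclnv}.
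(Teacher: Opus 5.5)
Your architecture matches the paper's: local Rayleigh inequalities with free weights, a seven-vertex flag SDP over $\{C_3,C_5\}$-free graphs with exact rational rounding, and balanced blow-ups to reach every finite $n$. For the blow-ups, $\qmin(G)\le\min_v d(v)$ gives $\qmin(G^{(t)})=t\qmin(G)$. The gap is at the step you yourself call the main obstacle, and the concrete vector you propose would fail there. The quantities $1+|N_1(v)|+|N_2(v)|$ and $e(N_2(v),N_3(v))$ are not densities of bounded-size rooted configurations. Whether a vertex lies at distance exactly two from $v$ depends on a common neighbour existing anywhere in $G$. A single added vertex can change $|N_2(v)|$ by $\Theta(n)$ while moving every seven-vertex density by only $O(1/n)$. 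So a seven-vertex sample does not ``see'' distances in $G$. Step~1 therefore fails for ball-type vectors: the resulting inequality is true but not linear in flag densities. (Also, since $x=0$ on $N_3(v)$, each $N_2$--$N_3$ edge contributes $1$ to the numerator, not $4$.)

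The missing idea is to let the weight of a vertex depend only on its adjacency pattern to a fixed bounded set of roots. The paper fixes an ordered five-tuple $R$ inducing a type $\sigma$, sets $z=0$ on $R$, and sets $z_x=w(\{i:xv_i\in E\})$ for $x\notin R$. Then $z^Tz$ is a sum over one extension vertex and $\sum_{xy\in E,\,x,y\notin R}(z_x+z_y)^2$ is a sum over two. Both are exact combinations of seven-vertex densities, which is why seven appears ($5+2$), not any distance-three consideration. Edges to the roots contribute at most $5z^Tz$, which is lower order. Averaging gives $\qmin\mathcal D\le(N-6)\mathcal E+5\mathcal D$. The theorem then reduces to the linear counting inequality $r_0\mathcal D-\mathcal E\ge 10^{-10}-O(N^{-1})$ with $r_0=0.0569$.

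That inequality is the actual content of the result. The paper proves it with a specific certificate: forty flag squares over types of orders $1,3,5$, plus four weight vectors on $\sigma$ edgeless (two), $K_{1,4}$ and $P_5$. This is checked by exact integer coefficient comparison on all $89$ admissible seven-vertex graphs, with slack $\gamma_H>10^{-10}$ on every coordinate. Your proposal supplies neither a flag-expressible test-vector family nor any certificate, so as it stands it establishes no constant at all, let alone $0.0569$.
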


It also applies whenever further odd cycles are forbidden. After fixing five labelled vertices, we assign weights according to the neighbourhoods of the remaining vertices among these labels. Averaging Rayleigh inequalities gives a relation between two weighted subgraph counts. We establish the complementary counting inequality by expressing their difference as forty averaged flag squares and a strictly positive combination of seven-vertex densities. An integer certificate makes this coefficient comparison exact. This retains information about the least eigenvalue which would be lost by first replacing it with a single path ratio.

There is related work on the sum of the largest and smallest adjacency eigenvalues. Csikv\'ari \cite{csikvari} proves $\lambda_1(A)+\lambda_n(A)\le(3-2\sqrt2)n$ for all triangle-free graphs, extending an earlier bound of Brandt from the regular case. Abiad, Taranchuk and van Veluw \cite{atv} prove $\lambda_1(A)+\lambda_n(A)<0.0396n$ for odd girth at least seven, and Yip \cite{yip} obtains an upper bound of order $ng^{-3}\log^3g$ for odd girth at least $g$. For a regular graph, $\qmin(G)=\lambda_1(A)+\lambda_n(A)$, so the coefficient $0.0396$ is already stronger than ours on that subclass. Theorem~\ref{th:main} concerns general graphs, for which these two spectral quantities are not comparable. This distinction and the corresponding high-odd-girth question for the signless Laplacian are discussed by Abiad, Kumar and Pragada \cite{akp}.

The structure and local density of the same graph class have also been studied. Letzter and Snyder \cite{letztersnyder} proved that every $\{C_3,C_5\}$-free graph of order $n$ with minimum degree greater than $n/5$ is homomorphic to a generalised Andr\'asfai graph. Bedenknecht, Mota, Reiher and Schacht \cite{bmrs} established sparse-half bounds for graphs admitting such homomorphisms. In particular, under the same odd-girth and minimum-degree assumptions, there is a set of $\lfloor n/2\rfloor$ vertices spanning at most $n^2/98$ edges.

Odd-cycle counts provide another measure of the departure from bipartiteness. Grzesik \cite{grzesik} and, independently, Hatami, Hladk\'y, Kr\'al', Norine and Razborov \cite{hhknr} used flag algebras to prove that every triangle-free graph of order $n$ contains at most $(n/5)^5$ pentagons. Grzesik and Kielak \cite{grzesikkielak} subsequently proved by a combinatorial argument that, for every odd $k\ge7$, a graph of order $n$ and odd girth at least $k$ contains at most $(n/k)^k$ copies of $C_k$. In particular, their result applies to the same $\{C_3,C_5\}$-free class considered here. Our counting inequality uses weighted seven-vertex densities to control the least signless Laplacian eigenvalue.

We use the averaging principle of flag algebras \cite{razborov} to establish an explicit counting inequality. The proof presents the local test vectors first, then the flag squares, their expansion into seven-vertex densities, and the coordinatewise coefficient comparison. The finite calculation uses exact integers on the $89$ admissible seven-vertex graphs. Section~\ref{sec:proof} gives the proof, and Section~\ref{sec:conclusion} discusses the remaining extremal problem.

\section{Proof of Theorem~\ref{th:main}}\label{sec:proof}

The proof has two parts. We first associate two weighted subgraph counts, $\mathcal D$ and $\mathcal E$, with four local test vectors and bound $\qmin(G)\mathcal D(G)$ in terms of $\mathcal E(G)$. We then use flag algebras to establish a counting inequality with a positive remainder. Under the assumption $\qmin(G)\ge0.0569n$, these estimates are incompatible after taking a sufficiently large balanced blow-up. This follows the spectral-reduction--counting-inequality strategy of \cite{bclnv}; here the local counts retain information from five labelled vertices.

\subsection{A local spectral reduction}

Let $G$ have order $N\ge7$. Fix a five-vertex graph $\sigma$ with labels $0,\ldots,4$, and assign a real weight $w(S)$ to every possible neighbourhood $S\subseteq\{0,\ldots,4\}$ of an additional vertex. For an ordered tuple $R=(v_0,\ldots,v_4)$ of distinct vertices inducing precisely $\sigma$, put
\[
w_R(x)=w(\{i:xv_i\in E(G)\})\qquad(x\notin R).
\]
Thus the weight of $x$ depends only on its adjacency to the five roots. If $G$ is $\{C_3,C_5\}$-free, only neighbourhoods giving $\{C_3,C_5\}$-free extensions of $\sigma$ arise.

To apply \eqref{eq:rayleigh}, consider the local test vector $z\in\RR^{V(G)}$ defined by
\[
z_x=\begin{cases}0,&x\in R,\\ w_R(x),&x\notin R.\end{cases}
\]
Write $d_R(x)=|N_G(x)\cap R|$. Splitting the edges according to whether they meet $R$ gives
\begin{align*}
z^Tz&=\sum_{x\notin R}w_R(x)^2,\\
z^TQz&=\sum_{\substack{xy\in E(G)\\x,y\notin R}}(w_R(x)+w_R(y))^2
+\sum_{x\notin R}d_R(x)w_R(x)^2.
\end{align*}
Indeed, an edge inside $R$ contributes zero, while an edge from $x\notin R$ to a root contributes $w_R(x)^2$. The first sum in the quadratic form depends on two vertices outside $R$, whereas the squared norm depends on one. We therefore introduce the normalised quantities
\begin{align}
D_R(w)&=\frac1{N-5}\sum_{x\notin R}w_R(x)^2,\label{eq:rootD}\\
E_R(w)&=\frac1{(N-5)(N-6)}
\sum_{\substack{xy\in E(G)\\x,y\notin R}}(w_R(x)+w_R(y))^2.\label{eq:rootE}
\end{align}
The denominator in \eqref{eq:rootD} counts vertices outside $R$, and that in \eqref{eq:rootE} counts ordered pairs of distinct vertices outside $R$. Each edge in \eqref{eq:rootE} is counted only once. Consequently, $E_R(w)$ is one half of the average of $\ones_{xy\in E(G)}(w_R(x)+w_R(y))^2$ over those ordered pairs. This convention matches the flag products used in the next subsection.

Let $D_\sigma(w;G)$ and $E_\sigma(w;G)$ be the averages of these quantities over all ordered five-tuples of distinct vertices, with contribution zero from a tuple of the wrong type. We always use this unconditional averaging convention. The terms involving roots have $d_R(x)\le5$, which yields the following estimate.

\begin{Lem}\label{lem:local}
For every choice of $\sigma,w$ and every graph $G$ of order $N\ge7$,
\begin{equation}\label{eq:localrayleigh}
\qmin(G)D_\sigma(w;G)
\le (N-6)E_\sigma(w;G)+5D_\sigma(w;G).
\end{equation}
\end{Lem}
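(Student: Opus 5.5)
We apply the Rayleigh characterisation \eqref{eq:rayleigh} separately to each local test vector $z$ and then average. Fix an ordered five-tuple $R$ of distinct vertices. If $R$ does not induce $\sigma$, its contribution to both $D_\sigma(w;G)$ and $E_\sigma(w;G)$ is zero, so the desired per-tuple inequality $\qmin(G)D_R\le(N-6)E_R+5D_R$ holds trivially with $D_R=E_R=0$. If $R$ induces $\sigma$, we use $z$ as defined above. When $z=0$ we have $D_R(w)=0$ and $E_R(w)=0$, and again nothing needs to be proved. Otherwise \eqref{eq:rayleigh} gives $\qmin(G)\,z^Tz\le z^TQz$. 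This inequality remains valid when $z=0$, so no case split is really needed: $\qmin(G)\,z^Tz\le z^TQz$ holds for every vector $z$.

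Next we insert the two displayed expressions for $z^Tz$ and $z^TQz$. The root term satisfies $\sum_{x\notin R}d_R(x)w_R(x)^2\le 5\sum_{x\notin R}w_R(x)^2$, since $d_R(x)\le |R|=5$ and $w_R(x)^2\ge0$. Dividing by $N-5>0$ and using \eqref{eq:rootD} and \eqref{eq:rootE}, we obtain
\[
\qmin(G)D_R(w)\le \frac{1}{N-5}\sum_{\substack{xy\in E(G)\\x,y\notin R}}(w_R(x)+w_R(y))^2+5D_R(w)=(N-6)E_R(w)+5D_R(w).
\]
Here we use $N\ge7$, which guarantees $N-6\ge1$, so that the normalisation in \eqref{eq:rootE} is well defined.

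Finally, we average this inequality over all ordered five-tuples of distinct vertices, assigning zero to the tuples of the wrong type. The three quantities involved, $\qmin(G)$, $N-6$ and $5$, are constants independent of $R$. Linearity of averaging therefore yields \eqref{eq:localrayleigh} directly. The only real points of care are the bookkeeping conventions: each edge is counted once in \eqref{eq:rootE}, which matches the stated convention, and the zero contributions of the wrong-type tuples are consistent on both sides. There is no genuine obstacle in this argument.
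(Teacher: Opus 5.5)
Your proposal is correct and follows the paper's proof essentially verbatim: you apply $\qmin(G)z^Tz\le z^TQz$ to each local test vector (noting it holds trivially for $z=0$), bound the root term using $d_R(x)\le5$, divide by $N-5$, and average over ordered five-tuples with zero contribution from tuples of the wrong type.
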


\begin{proof}
For each tuple $R$ inducing $\sigma$, apply $\qmin(G)z^Tz\le z^TQz$ to the vector above; this inequality also holds when $z=0$. Substituting \eqref{eq:rootD} and \eqref{eq:rootE} and dividing by $N-5$ gives
\begin{align*}
\qmin(G)D_R(w)
&\le (N-6)E_R(w)+\frac1{N-5}\sum_{x\notin R}d_R(x)w_R(x)^2\\
&\le (N-6)E_R(w)+5D_R(w).
\end{align*}
Averaging over all ordered five-tuples, with zero contribution from tuples of the wrong type, proves \eqref{eq:localrayleigh}.
\end{proof}

We shall use four choices of $(\sigma,w)$: two with $\sigma$ edgeless, one with $\sigma=K_{1,4}$, and one with $\sigma=P_5$. Their precise labels appear as $\sigma_1,\sigma_2,\sigma_3$ in Figure~\ref{fig:types}; their weights are specified by the four columns of the matrices $V_\sigma$ in Section~\ref{subsec:certificate}. Define $\mathcal D(G)$ and $\mathcal E(G)$ to be the respective sums of $D_\sigma(w;G)$ and $E_\sigma(w;G)$ over these four choices. Lemma~\ref{lem:local} gives
\begin{equation}\label{eq:spectralreduction}
\qmin(G)\mathcal D(G)\le (N-6)\mathcal E(G)+5\mathcal D(G).
\end{equation}
Both counts are nonnegative, and $\mathcal D(G)$ is bounded by a constant depending only on the four fixed weight vectors.

\subsection{Flags, products and nonnegative squares}

Call a graph {\it admissible} if it is $\{C_3,C_5\}$-free. Let $\mathcal H_7$ be the isomorphism classes of admissible seven-vertex graphs, and let $p(H,G)$ be the proportion of seven-element subsets of $V(G)$ that induce $H$. There are $89$ members of $\mathcal H_7$: the $88$ bipartite graphs and $C_7$. To see the latter assertion, an admissible nonbipartite graph on seven vertices must contain a spanning $7$-cycle; every chord would create a triangle or a pentagon.

A type $\tau$ is an admissible graph with fixed, distinct vertex labels. A $\tau$-flag is an admissible graph containing these labelled vertices and inducing exactly $\tau$ on them. Isomorphisms of flags fix every label. Figure~\ref{fig:types} shows the types used in the proof: $\tau_1,\ldots,\tau_7$ support the ordinary flag squares, and $\sigma_1,\sigma_2,\sigma_3$ support the local Rayleigh expressions. Throughout the figures, hollow vertices carry fixed labels and solid vertices are unlabelled. Isolated labelled vertices are retained.

\begin{figure}[!htb]
\centering
\begin{minipage}[b]{3.0cm}\centering
\begin{tikzpicture}[x=0.82cm,y=0.82cm,line width=0.6pt]
\path[use as bounding box] (-1.02,-1.04) rectangle (1.02,1.14);
\coordinate (v0) at (0,0);
\draw[fill=white] (v0) circle[radius=2pt];
\node[above=3pt,inner sep=0pt,font=\scriptsize] at (v0) {$ 0 $};
\end{tikzpicture}\\[-0.6ex]
{\small $\tau_1$}
\end{minipage}
\begin{minipage}[b]{3.0cm}\centering
\begin{tikzpicture}[x=0.82cm,y=0.82cm,line width=0.6pt]
\path[use as bounding box] (-1.02,-1.04) rectangle (1.02,1.14);
\coordinate (v0) at (-0.65,0.4);
\coordinate (v1) at (0.65,0.4);
\coordinate (v2) at (0,-0.5);
\draw[fill=white] (v0) circle[radius=2pt];
\node[above=3pt,inner sep=0pt,font=\scriptsize] at (v0) {$ 0 $};
\draw[fill=white] (v1) circle[radius=2pt];
\node[above=3pt,inner sep=0pt,font=\scriptsize] at (v1) {$ 1 $};
\draw[fill=white] (v2) circle[radius=2pt];
\node[above=3pt,inner sep=0pt,font=\scriptsize] at (v2) {$ 2 $};
\end{tikzpicture}\\[-0.6ex]
{\small $\tau_2$}
\end{minipage}
\begin{minipage}[b]{3.0cm}\centering
\begin{tikzpicture}[x=0.82cm,y=0.82cm,line width=0.6pt]
\path[use as bounding box] (-1.02,-1.04) rectangle (1.02,1.14);
\coordinate (v0) at (-0.65,0.4);
\coordinate (v1) at (0.65,0.4);
\coordinate (v2) at (0,-0.5);
\draw (v0)--(v1);
\draw[fill=white] (v0) circle[radius=2pt];
\node[above=3pt,inner sep=0pt,font=\scriptsize] at (v0) {$ 0 $};
\draw[fill=white] (v1) circle[radius=2pt];
\node[above=3pt,inner sep=0pt,font=\scriptsize] at (v1) {$ 1 $};
\draw[fill=white] (v2) circle[radius=2pt];
\node[above=3pt,inner sep=0pt,font=\scriptsize] at (v2) {$ 2 $};
\end{tikzpicture}\\[-0.6ex]
{\small $\tau_3$}
\end{minipage}
\begin{minipage}[b]{3.0cm}\centering
\begin{tikzpicture}[x=0.82cm,y=0.82cm,line width=0.6pt]
\path[use as bounding box] (-1.02,-1.04) rectangle (1.02,1.14);
\coordinate (v0) at (0,0.55);
\coordinate (v1) at (-0.65,-0.4);
\coordinate (v2) at (0.65,-0.4);
\draw (v0)--(v1);
\draw (v0)--(v2);
\draw[fill=white] (v0) circle[radius=2pt];
\node[above=3pt,inner sep=0pt,font=\scriptsize] at (v0) {$ 0 $};
\draw[fill=white] (v1) circle[radius=2pt];
\node[above=3pt,inner sep=0pt,font=\scriptsize] at (v1) {$ 1 $};
\draw[fill=white] (v2) circle[radius=2pt];
\node[above=3pt,inner sep=0pt,font=\scriptsize] at (v2) {$ 2 $};
\end{tikzpicture}\\[-0.6ex]
{\small $\tau_4$}
\end{minipage}
\begin{minipage}[b]{3.0cm}\centering
\begin{tikzpicture}[x=0.82cm,y=0.82cm,line width=0.6pt]
\path[use as bounding box] (-1.02,-1.04) rectangle (1.02,1.14);
\coordinate (v0) at (0,0);
\coordinate (v1) at (-0.7,0.65);
\coordinate (v2) at (0,0.8);
\coordinate (v3) at (0.7,0.65);
\coordinate (v4) at (0,-0.65);
\draw (v0)--(v1);
\draw (v0)--(v2);
\draw (v0)--(v3);
\draw[fill=white] (v0) circle[radius=2pt];
\node[left=3pt,inner sep=0pt,font=\scriptsize] at (v0) {$ 0 $};
\draw[fill=white] (v1) circle[radius=2pt];
\node[above=3pt,inner sep=0pt,font=\scriptsize] at (v1) {$ 1 $};
\draw[fill=white] (v2) circle[radius=2pt];
\node[above=3pt,inner sep=0pt,font=\scriptsize] at (v2) {$ 2 $};
\draw[fill=white] (v3) circle[radius=2pt];
\node[above=3pt,inner sep=0pt,font=\scriptsize] at (v3) {$ 3 $};
\draw[fill=white] (v4) circle[radius=2pt];
\node[above=3pt,inner sep=0pt,font=\scriptsize] at (v4) {$ 4 $};
\end{tikzpicture}\\[-0.6ex]
{\small $\tau_5$}
\end{minipage}\par\medskip\begin{minipage}[b]{3.0cm}\centering
\begin{tikzpicture}[x=0.82cm,y=0.82cm,line width=0.6pt]
\path[use as bounding box] (-1.02,-1.04) rectangle (1.02,1.14);
\coordinate (v0) at (0,0.65);
\coordinate (v1) at (-0.6,-0.6);
\coordinate (v2) at (0.6,-0.6);
\coordinate (v3) at (-0.65,0.05);
\coordinate (v4) at (0.65,0.05);
\draw (v0)--(v3);
\draw (v0)--(v4);
\draw (v1)--(v2);
\draw[fill=white] (v0) circle[radius=2pt];
\node[above=3pt,inner sep=0pt,font=\scriptsize] at (v0) {$ 0 $};
\draw[fill=white] (v1) circle[radius=2pt];
\node[above=3pt,inner sep=0pt,font=\scriptsize] at (v1) {$ 1 $};
\draw[fill=white] (v2) circle[radius=2pt];
\node[above=3pt,inner sep=0pt,font=\scriptsize] at (v2) {$ 2 $};
\draw[fill=white] (v3) circle[radius=2pt];
\node[above=3pt,inner sep=0pt,font=\scriptsize] at (v3) {$ 3 $};
\draw[fill=white] (v4) circle[radius=2pt];
\node[above=3pt,inner sep=0pt,font=\scriptsize] at (v4) {$ 4 $};
\end{tikzpicture}\\[-0.6ex]
{\small $\tau_6$}
\end{minipage}
\begin{minipage}[b]{3.0cm}\centering
\begin{tikzpicture}[x=0.82cm,y=0.82cm,line width=0.6pt]
\path[use as bounding box] (-1.02,-1.04) rectangle (1.02,1.14);
\coordinate (v0) at (0,0.25);
\coordinate (v1) at (0,-0.65);
\coordinate (v2) at (-0.65,-0.2);
\coordinate (v3) at (0.65,-0.2);
\coordinate (v4) at (0,0.85);
\draw (v0)--(v2);
\draw (v0)--(v3);
\draw (v0)--(v4);
\draw (v1)--(v2);
\draw (v1)--(v3);
\draw[fill=white] (v0) circle[radius=2pt];
\node[right=3pt,inner sep=0pt,font=\scriptsize] at (v0) {$ 0 $};
\draw[fill=white] (v1) circle[radius=2pt];
\node[above=3pt,inner sep=0pt,font=\scriptsize] at (v1) {$ 1 $};
\draw[fill=white] (v2) circle[radius=2pt];
\node[above=3pt,inner sep=0pt,font=\scriptsize] at (v2) {$ 2 $};
\draw[fill=white] (v3) circle[radius=2pt];
\node[above=3pt,inner sep=0pt,font=\scriptsize] at (v3) {$ 3 $};
\draw[fill=white] (v4) circle[radius=2pt];
\node[above=3pt,inner sep=0pt,font=\scriptsize] at (v4) {$ 4 $};
\end{tikzpicture}\\[-0.6ex]
{\small $\tau_7$}
\end{minipage}
\begin{minipage}[b]{3.0cm}\centering
\begin{tikzpicture}[x=0.82cm,y=0.82cm,line width=0.6pt]
\path[use as bounding box] (-1.02,-1.04) rectangle (1.02,1.14);
\coordinate (v0) at (0,0.75);
\coordinate (v1) at (-0.72,0.22);
\coordinate (v2) at (-0.44,-0.62);
\coordinate (v3) at (0.44,-0.62);
\coordinate (v4) at (0.72,0.22);
\draw[fill=white] (v0) circle[radius=2pt];
\node[above=3pt,inner sep=0pt,font=\scriptsize] at (v0) {$ 0 $};
\draw[fill=white] (v1) circle[radius=2pt];
\node[above=3pt,inner sep=0pt,font=\scriptsize] at (v1) {$ 1 $};
\draw[fill=white] (v2) circle[radius=2pt];
\node[above=3pt,inner sep=0pt,font=\scriptsize] at (v2) {$ 2 $};
\draw[fill=white] (v3) circle[radius=2pt];
\node[above=3pt,inner sep=0pt,font=\scriptsize] at (v3) {$ 3 $};
\draw[fill=white] (v4) circle[radius=2pt];
\node[above=3pt,inner sep=0pt,font=\scriptsize] at (v4) {$ 4 $};
\end{tikzpicture}\\[-0.6ex]
{\small $\sigma_1$}
\end{minipage}
\begin{minipage}[b]{3.0cm}\centering
\begin{tikzpicture}[x=0.82cm,y=0.82cm,line width=0.6pt]
\path[use as bounding box] (-1.02,-1.04) rectangle (1.02,1.14);
\coordinate (v0) at (0,0);
\coordinate (v1) at (-0.65,0.65);
\coordinate (v2) at (0.65,0.65);
\coordinate (v3) at (0.65,-0.65);
\coordinate (v4) at (-0.65,-0.65);
\draw (v0)--(v1);
\draw (v0)--(v2);
\draw (v0)--(v3);
\draw (v0)--(v4);
\draw[fill=white] (v0) circle[radius=2pt];
\node[above=3pt,inner sep=0pt,font=\scriptsize] at (v0) {$ 0 $};
\draw[fill=white] (v1) circle[radius=2pt];
\node[above=3pt,inner sep=0pt,font=\scriptsize] at (v1) {$ 1 $};
\draw[fill=white] (v2) circle[radius=2pt];
\node[above=3pt,inner sep=0pt,font=\scriptsize] at (v2) {$ 2 $};
\draw[fill=white] (v3) circle[radius=2pt];
\node[above=3pt,inner sep=0pt,font=\scriptsize] at (v3) {$ 3 $};
\draw[fill=white] (v4) circle[radius=2pt];
\node[above=3pt,inner sep=0pt,font=\scriptsize] at (v4) {$ 4 $};
\end{tikzpicture}\\[-0.6ex]
{\small $\sigma_2$}
\end{minipage}
\begin{minipage}[b]{3.0cm}\centering
\begin{tikzpicture}[x=0.82cm,y=0.82cm,line width=0.6pt]
\path[use as bounding box] (-1.02,-1.04) rectangle (1.02,1.14);
\coordinate (v0) at (-0.42,0.45);
\coordinate (v1) at (0.42,0.45);
\coordinate (v2) at (0,0.85);
\coordinate (v3) at (0.8,-0.25);
\coordinate (v4) at (-0.8,-0.25);
\draw (v0)--(v2);
\draw (v0)--(v4);
\draw (v1)--(v2);
\draw (v1)--(v3);
\draw[fill=white] (v0) circle[radius=2pt];
\node[above=3pt,inner sep=0pt,font=\scriptsize] at (v0) {$ 0 $};
\draw[fill=white] (v1) circle[radius=2pt];
\node[above=3pt,inner sep=0pt,font=\scriptsize] at (v1) {$ 1 $};
\draw[fill=white] (v2) circle[radius=2pt];
\node[above=3pt,inner sep=0pt,font=\scriptsize] at (v2) {$ 2 $};
\draw[fill=white] (v3) circle[radius=2pt];
\node[above=3pt,inner sep=0pt,font=\scriptsize] at (v3) {$ 3 $};
\draw[fill=white] (v4) circle[radius=2pt];
\node[above=3pt,inner sep=0pt,font=\scriptsize] at (v4) {$ 4 $};
\end{tikzpicture}\\[-0.6ex]
{\small $\sigma_3$}
\end{minipage}

\caption{The labelled types.}\label{fig:types}
\end{figure}

\Needspace{9\baselineskip}
For a type $\tau$ of order $k\in\{1,3,5\}$, write $F_1^\tau,\ldots,F_{d_\tau}^\tau$ for all its flags of order $\ell=(7+k)/2$. Choose an ordered tuple $(v_0,\ldots,v_6)$ of seven distinct vertices of $G$ uniformly at random. The first flag uses $v_0,\ldots,v_{\ell-1}$, and the second uses $v_0,\ldots,v_{k-1},v_\ell,\ldots,v_6$. Tuples of the wrong root type contribute zero. Define
\[
\av{F_i^\tau F_j^\tau}_G
=\Pr(\text{the first and second flags are }F_i^\tau,F_j^\tau).
\]
Extend this notation bilinearly: for $L=\sum_i a_iF_i^\tau$, put
\begin{equation}\label{eq:flagaverage}
\av{L^2}_G=\sum_{i,j}a_i a_j\av{F_i^\tau F_j^\tau}_G.
\end{equation}
This is the usual averaged flag square. With sampling without replacement it need not be nonnegative on a fixed finite graph; the following asymptotic form is sufficient.

\begin{Lem}\label{lem:flagsquare}
For every fixed linear combination $L$ of flags of one type, there is a constant $C_L$ such that
\begin{equation}\label{eq:flagsquarenonneg}
\av{L^2}_G\ge-\frac{C_L}{N}
\end{equation}
for every admissible graph $G$ of sufficiently large order $N$.
\end{Lem}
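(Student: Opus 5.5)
The plan is to compare the averaged flag square with a sampling \emph{with} replacement of the two unlabelled parts, which is manifestly a nonnegative sum of squares, and to show that the two differ by $O(1/N)$.

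Fix $\tau$ of order $k$ and $L=\sum_i a_iF_i^\tau$. Condition on the root tuple $R=(v_0,\ldots,v_{k-1})$. If $R$ does not induce $\tau$, it contributes zero. Otherwise, for each $i$ let $p_i(R)$ be the probability that a uniformly random ordered $(\ell-k)$-tuple of distinct vertices outside $R$, adjoined to $R$, induces the flag $F_i^\tau$. Put $L(R)=\sum_i a_ip_i(R)$.

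Given $R$, the two blocks $(v_k,\ldots,v_{\ell-1})$ and $(v_\ell,\ldots,v_6)$ form a uniformly random pair of disjoint ordered $(\ell-k)$-tuples outside $R$. Their joint law differs from that of two independent tuples only through the event that the independent tuples share a vertex. That event has probability at most $(\ell-k)^2/(N-k)=O(1/N)$. Hence
\[
\Pr(\text{flags }F_i^\tau,F_j^\tau\mid R)=p_i(R)p_j(R)+\varepsilon_{ij}(R),\qquad |\varepsilon_{ij}(R)|\le\frac{c}{N},
\]
where $c$ depends only on $k$ and $\ell$, not on $R$ or $G$. To justify this, couple the two sampling schemes by drawing independent tuples and resampling on collision; the total variation distance is bounded by the collision probability.

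Now multiply by $a_ia_j$, sum over $i,j$, and average over $R$, including the wrong-type tuples, which contribute zero. This gives
\[
\av{L^2}_G=\mathbb E_R\bigl[\ones_{R\text{ induces }\tau}\,L(R)^2\bigr]+\mathbb E_R\Bigl[\sum_{i,j}a_ia_j\varepsilon_{ij}(R)\Bigr]\ge-\frac{c\,(\sum_i|a_i|)^2}{N}.
\]
So we may take $C_L=c(\sum_i|a_i|)^2$. Admissibility plays no role beyond restricting which flags occur, and "sufficiently large" only needs $N\ge 7$, so that the tuples exist.

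The only point requiring care is the coupling estimate. One has to check that the conditional law of the second block given the first is uniform on tuples avoiding $R$ and the first block. This is immediate, since the ordered $7$-tuple is uniform, so the error is a clean collision bound. I expect no genuine obstacle: this is the standard asymptotic positivity of flag squares, and the whole argument is a finite-$N$ bookkeeping of the $O(1/N)$ term.
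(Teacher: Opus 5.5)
Your proposal is correct and takes essentially the same route as the paper's proof. You compare with two independent extension samples, whose average is the nonnegative quantity $\mathbb E_R[\ones_{R\text{ induces }\tau}L(R)^2]$, bound the collision probability by $(\ell-k)^2/(N-k)$, and use that, conditioned on disjointness, the two blocks have exactly the law used in $\av{\cdot}_G$; your total-variation coupling makes the paper's ``conditioning changes the average by $O(N^{-1})$'' explicit and gives the concrete constant $C_L=c(\sum_i|a_i|)^2$.
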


\begin{proof}
Fix roots inducing $\tau$, and let $f_i$ be the probability of obtaining $F_i^\tau$ by choosing a uniformly random $(\ell-k)$-set of other vertices. Independent choices for two such sets $S,T$ give $(\sum_i a_i f_i)^2\ge0$. Moreover,
\[
\Pr(S\cap T\ne\varnothing)\le\frac{(\ell-k)^2}{N-k}=O(N^{-1}),
\]
uniformly in the roots. Conditional on $S\cap T=\varnothing$, the distribution is exactly that of the two flags defined above: every ordered pair of disjoint extension sets has the same number of internal orderings. Since the coefficients are fixed, this conditioning changes the average by $O(N^{-1})$. Averaging over the roots, with zero contribution from an incorrect type, proves the assertion.
\end{proof}

To make the averaging concrete, consider $F_8^{\sigma_3}$ and $F_6^{\sigma_3}$ in Figure~\ref{fig:product}. The root type is the path $4,0,2,1,3$. The extension of $F_8^{\sigma_3}$ is adjacent only to root $4$, and that of $F_6^{\sigma_3}$ only to root $3$. Write $x$ for the extension on the left and $y$ for the one on the right. Their mutual edge is the only unspecified pair: its absence gives $P_7$, and its presence gives $C_7$. Consequently,
\begin{align}
\av{F_8^{\sigma_3}F_6^{\sigma_3}}_G
&=\frac1{2520}p(P_7,G)+\frac1{360}p(C_7,G),\label{eq:productexample}\\
\av{F_8^{\sigma_3}F_6^{\sigma_3}\,\mathbf1_{xy\in E}}_G
&=\frac1{360}p(C_7,G).\label{eq:edgeexample}
\end{align}
Indeed, among the $7!=5040$ orderings, exactly two orderings of $P_7$ and fourteen of $C_7$ produce this ordered flag pair. The edge condition keeps only the latter. Here and below $x,y$ denote the two extension vertices of a five-root flag product.

\begin{figure}[H]
\centering
\begin{minipage}[b]{3.75cm}\centering
\begin{tikzpicture}[x=0.82cm,y=0.82cm,line width=0.6pt]
\path[use as bounding box] (-1.2,-1.03) rectangle (1.2,1.25);
\coordinate (v0) at (-0.58,0.65);
\coordinate (v1) at (0.58,0.65);
\coordinate (v2) at (0,1);
\coordinate (v3) at (1,0);
\coordinate (v4) at (-1,0);
\coordinate (v5) at (-0.62,-0.85);
\draw (v0)--(v2);
\draw (v0)--(v4);
\draw (v1)--(v2);
\draw (v1)--(v3);
\draw (v4)--(v5);
\draw[fill=white] (v0) circle[radius=2pt];
\node[above=3pt,inner sep=0pt,font=\scriptsize] at (v0) {$ 0 $};
\draw[fill=white] (v1) circle[radius=2pt];
\node[above=3pt,inner sep=0pt,font=\scriptsize] at (v1) {$ 1 $};
\draw[fill=white] (v2) circle[radius=2pt];
\node[above=3pt,inner sep=0pt,font=\scriptsize] at (v2) {$ 2 $};
\draw[fill=white] (v3) circle[radius=2pt];
\node[above=3pt,inner sep=0pt,font=\scriptsize] at (v3) {$ 3 $};
\draw[fill=white] (v4) circle[radius=2pt];
\node[above=3pt,inner sep=0pt,font=\scriptsize] at (v4) {$ 4 $};
\draw[fill=black] (v5) circle[radius=2pt];
\end{tikzpicture}\\[-0.6ex]
{\small $F^{\sigma_3}_8$}
\end{minipage}
\begin{minipage}[b]{3.75cm}\centering
\begin{tikzpicture}[x=0.82cm,y=0.82cm,line width=0.6pt]
\path[use as bounding box] (-1.2,-1.03) rectangle (1.2,1.25);
\coordinate (v0) at (-0.58,0.65);
\coordinate (v1) at (0.58,0.65);
\coordinate (v2) at (0,1);
\coordinate (v3) at (1,0);
\coordinate (v4) at (-1,0);
\coordinate (v5) at (0.62,-0.85);
\draw (v0)--(v2);
\draw (v0)--(v4);
\draw (v1)--(v2);
\draw (v1)--(v3);
\draw (v3)--(v5);
\draw[fill=white] (v0) circle[radius=2pt];
\node[above=3pt,inner sep=0pt,font=\scriptsize] at (v0) {$ 0 $};
\draw[fill=white] (v1) circle[radius=2pt];
\node[above=3pt,inner sep=0pt,font=\scriptsize] at (v1) {$ 1 $};
\draw[fill=white] (v2) circle[radius=2pt];
\node[above=3pt,inner sep=0pt,font=\scriptsize] at (v2) {$ 2 $};
\draw[fill=white] (v3) circle[radius=2pt];
\node[above=3pt,inner sep=0pt,font=\scriptsize] at (v3) {$ 3 $};
\draw[fill=white] (v4) circle[radius=2pt];
\node[above=3pt,inner sep=0pt,font=\scriptsize] at (v4) {$ 4 $};
\draw[fill=black] (v5) circle[radius=2pt];
\end{tikzpicture}\\[-0.6ex]
{\small $F^{\sigma_3}_6$}
\end{minipage}
\begin{minipage}[b]{3.75cm}\centering
\begin{tikzpicture}[x=0.82cm,y=0.82cm,line width=0.6pt]
\path[use as bounding box] (-1.2,-1.03) rectangle (1.2,1.25);
\coordinate (v0) at (-0.58,0.65);
\coordinate (v1) at (0.58,0.65);
\coordinate (v2) at (0,1);
\coordinate (v3) at (1,0);
\coordinate (v4) at (-1,0);
\coordinate (v5) at (-0.62,-0.85);
\coordinate (v6) at (0.62,-0.85);
\draw (v0)--(v2);
\draw (v0)--(v4);
\draw (v1)--(v2);
\draw (v1)--(v3);
\draw (v4)--(v5);
\draw (v3)--(v6);
\draw[fill=white] (v0) circle[radius=2pt];
\node[above=3pt,inner sep=0pt,font=\scriptsize] at (v0) {$ 0 $};
\draw[fill=white] (v1) circle[radius=2pt];
\node[above=3pt,inner sep=0pt,font=\scriptsize] at (v1) {$ 1 $};
\draw[fill=white] (v2) circle[radius=2pt];
\node[above=3pt,inner sep=0pt,font=\scriptsize] at (v2) {$ 2 $};
\draw[fill=white] (v3) circle[radius=2pt];
\node[above=3pt,inner sep=0pt,font=\scriptsize] at (v3) {$ 3 $};
\draw[fill=white] (v4) circle[radius=2pt];
\node[above=3pt,inner sep=0pt,font=\scriptsize] at (v4) {$ 4 $};
\draw[fill=black] (v5) circle[radius=2pt];
\draw[fill=black] (v6) circle[radius=2pt];
\end{tikzpicture}\\[-0.6ex]
{\small $P_7\quad(xy\notin E)$}
\end{minipage}
\begin{minipage}[b]{3.75cm}\centering
\begin{tikzpicture}[x=0.82cm,y=0.82cm,line width=0.6pt]
\path[use as bounding box] (-1.2,-1.03) rectangle (1.2,1.25);
\coordinate (v0) at (-0.58,0.65);
\coordinate (v1) at (0.58,0.65);
\coordinate (v2) at (0,1);
\coordinate (v3) at (1,0);
\coordinate (v4) at (-1,0);
\coordinate (v5) at (-0.62,-0.85);
\coordinate (v6) at (0.62,-0.85);
\draw (v0)--(v2);
\draw (v0)--(v4);
\draw (v1)--(v2);
\draw (v1)--(v3);
\draw (v4)--(v5);
\draw (v3)--(v6);
\draw (v5)--(v6);
\draw[fill=white] (v0) circle[radius=2pt];
\node[above=3pt,inner sep=0pt,font=\scriptsize] at (v0) {$ 0 $};
\draw[fill=white] (v1) circle[radius=2pt];
\node[above=3pt,inner sep=0pt,font=\scriptsize] at (v1) {$ 1 $};
\draw[fill=white] (v2) circle[radius=2pt];
\node[above=3pt,inner sep=0pt,font=\scriptsize] at (v2) {$ 2 $};
\draw[fill=white] (v3) circle[radius=2pt];
\node[above=3pt,inner sep=0pt,font=\scriptsize] at (v3) {$ 3 $};
\draw[fill=white] (v4) circle[radius=2pt];
\node[above=3pt,inner sep=0pt,font=\scriptsize] at (v4) {$ 4 $};
\draw[fill=black] (v5) circle[radius=2pt];
\draw[fill=black] (v6) circle[radius=2pt];
\end{tikzpicture}\\[-0.6ex]
{\small $C_7\quad(xy\in E)$}
\end{minipage}
\caption{A flag product over $\sigma_3$.}\label{fig:product}
\end{figure}

We now express the spectral counts using the same flag products. Fix roots $R$ inducing a five-vertex type $\sigma$, and choose distinct vertices $x,y$ outside $R$ uniformly in order. Their marginal distributions are identical, so
\[
D_R(w)=\mathbb E[w_R(x)^2]
=\frac12\mathbb E[w_R(x)^2+w_R(y)^2].
\]
Let $w_i$ be the weight assigned to the neighbourhood described by $F_i^\sigma$. Classifying the ordered pair $(x,y)$ by its two flags, and then averaging over the roots, gives
\begin{align}
D_\sigma(w;G)
&=\frac12\sum_{i,j}(w_i^2+w_j^2)\av{F_i^\sigma F_j^\sigma}_G,\label{eq:flagD}\\
E_\sigma(w;G)
&=\frac12\sum_{i,j}(w_i+w_j)^2
\av{F_i^\sigma F_j^\sigma\,\mathbf1_{xy\in E}}_G.\label{eq:flagE}
\end{align}
In \eqref{eq:flagD}, the factor $1/2$ comes from the equality of the two marginal distributions. In \eqref{eq:flagE}, it accounts for the two orders of each edge, as in \eqref{eq:rootE}. Five roots and two extension vertices give seven vertices in total, so both $\mathcal D$ and $\mathcal E$ are linear combinations of seven-vertex induced densities. The counting inequality needed for \eqref{eq:spectralreduction} can therefore be sought in this common space.

\subsection{The flag algebra inequality}\label{subsec:certificate}

Our aim is to express $r\mathcal D-\mathcal E$ as a sum of averaged flag squares and a combination of seven-vertex densities with strictly positive coefficients. For a fixed $r$, allowing positive semidefinite Gram matrices for the ordinary squares and the local Rayleigh expressions turns the coefficient conditions into a semidefinite feasibility problem. Numerical optimisation supplies candidate matrices; rational factors and exact coefficient comparisons establish the certificate used below. The displayed types and columns are those retained in this certificate.

Put $r_0=569/10000$ and $s=10^8$. The certificate specifies integer matrices $U_\tau$ for $\tau_1,\ldots,\tau_7$, and $V_\sigma$ for $\sigma_1,\sigma_2,\sigma_3$. Each row corresponds to one flag. A column $u$ of $U_\tau$ defines the linear combination
\begin{equation}\label{eq:columnflag}
L_u=\frac1s\sum_{i=1}^{d_\tau}u_iF_i^\tau,
\end{equation}
and a column $c$ of $V_\sigma$ defines the local weights $w_i=c_i/s$ in \eqref{eq:flagD}--\eqref{eq:flagE}. Table~\ref{tab:types} lists the type codes and flag dimensions. Its last two columns count ordinary flag squares and local test vectors, respectively, giving $40$ squares and four test vectors in total.

\begin{table}[H]
\centering
\caption{Types and flag dimensions.}\label{tab:types}
\begin{tabular}{crrrrr}
\toprule
Type & Order & Code & Number of flags & $U$ columns & $V$ columns\\
\midrule
$\tau_1$ & 1 & 0 & 12 & 5 & 0\\
$\tau_2$ & 3 & 0 & 50 & 7 & 0\\
$\tau_3$ & 3 & 1 & 31 & 14 & 0\\
$\tau_4$ & 3 & 3 & 23 & 11 & 0\\
$\tau_5$ & 5 & 7 & 18 & 1 & 0\\
$\tau_6$ & 5 & 28 & 15 & 1 & 0\\
$\tau_7$ & 5 & 62 & 11 & 1 & 0\\
$\sigma_1$ & 5 & 0 & 32 & 0 & 2\\
$\sigma_2$ & 5 & 15 & 17 & 0 & 1\\
$\sigma_3$ & 5 & 58 & 11 & 0 & 1\\
\bottomrule
\end{tabular}
\end{table}

For reproducibility, encode the edges of a graph on $0,\ldots,m-1$ by $\sum_{\{i,j\}\in E}2^{\operatorname{index}(i,j)}$, with pairs $i<j$ listed lexicographically from index zero. Type codes are the smallest over all relabellings. Flag codes are the smallest over permutations of unlabelled vertices only, and the flags are ordered by increasing code, with indices starting at $1$. The complete integer matrices are specified in the single ancillary file described at the end of the paper and form part of the certificate. In that file, the blocks named \texttt{gram\_(k, code)} are $U_\tau$, and those named \texttt{spectral\_(5, code)} are $V_\sigma$, where \texttt{code} is the type code in Table~\ref{tab:types}.

One of the $40$ squares has a particularly short form. The second column of $U_{\tau_2}$ gives exactly
\begin{equation}\label{eq:examplesquare}
\frac{202179961}{10^{16}}
\av{\left(F_7^{\tau_2}-F_9^{\tau_2}-F_{14}^{\tau_2}
+F_{23}^{\tau_2}+F_{28}^{\tau_2}-F_{30}^{\tau_2}\right)^2}_G.
\end{equation}
The six flags are shown in Figure~\ref{fig:square}. Their codes are, respectively, $44,100,140,224,388,416$. Although their underlying unlabelled graphs are all $P_4\cup K_1$, they are distinct flags because the root labels are fixed. The common factor is $14219^2/s^2$. This illustrates how a column of the certificate yields an explicit nonnegative flag expression, up to the finite sampling error in Lemma~\ref{lem:flagsquare}.

\begin{figure}[H]
\centering
\begin{minipage}[b]{2.45cm}\centering
\begin{tikzpicture}[x=0.86cm,y=0.86cm,line width=0.6pt]
\path[use as bounding box] (-1.02,-1.04) rectangle (1.02,1.14);
\coordinate (v0) at (-0.75,0.55);
\coordinate (v1) at (0,0.55);
\coordinate (v2) at (0.75,0.55);
\coordinate (v3) at (-0.42,-0.55);
\coordinate (v4) at (0.42,-0.55);
\draw (v0)--(v3);
\draw (v0)--(v4);
\draw (v1)--(v3);
\draw[fill=white] (v0) circle[radius=2pt];
\node[above=3pt,inner sep=0pt,font=\scriptsize] at (v0) {$ 0 $};
\draw[fill=white] (v1) circle[radius=2pt];
\node[above=3pt,inner sep=0pt,font=\scriptsize] at (v1) {$ 1 $};
\draw[fill=white] (v2) circle[radius=2pt];
\node[above=3pt,inner sep=0pt,font=\scriptsize] at (v2) {$ 2 $};
\draw[fill=black] (v3) circle[radius=2pt];
\draw[fill=black] (v4) circle[radius=2pt];
\end{tikzpicture}\\[-0.6ex]
{\small $F^{\tau_2}_{7}$}
\end{minipage}
\begin{minipage}[b]{2.45cm}\centering
\begin{tikzpicture}[x=0.86cm,y=0.86cm,line width=0.6pt]
\path[use as bounding box] (-1.02,-1.04) rectangle (1.02,1.14);
\coordinate (v0) at (-0.75,0.55);
\coordinate (v1) at (0,0.55);
\coordinate (v2) at (0.75,0.55);
\coordinate (v3) at (-0.42,-0.55);
\coordinate (v4) at (0.42,-0.55);
\draw (v0)--(v3);
\draw (v1)--(v3);
\draw (v1)--(v4);
\draw[fill=white] (v0) circle[radius=2pt];
\node[above=3pt,inner sep=0pt,font=\scriptsize] at (v0) {$ 0 $};
\draw[fill=white] (v1) circle[radius=2pt];
\node[above=3pt,inner sep=0pt,font=\scriptsize] at (v1) {$ 1 $};
\draw[fill=white] (v2) circle[radius=2pt];
\node[above=3pt,inner sep=0pt,font=\scriptsize] at (v2) {$ 2 $};
\draw[fill=black] (v3) circle[radius=2pt];
\draw[fill=black] (v4) circle[radius=2pt];
\end{tikzpicture}\\[-0.6ex]
{\small $F^{\tau_2}_{9}$}
\end{minipage}
\begin{minipage}[b]{2.45cm}\centering
\begin{tikzpicture}[x=0.86cm,y=0.86cm,line width=0.6pt]
\path[use as bounding box] (-1.02,-1.04) rectangle (1.02,1.14);
\coordinate (v0) at (-0.75,0.55);
\coordinate (v1) at (0,0.55);
\coordinate (v2) at (0.75,0.55);
\coordinate (v3) at (-0.42,-0.55);
\coordinate (v4) at (0.42,-0.55);
\draw (v0)--(v3);
\draw (v0)--(v4);
\draw (v2)--(v3);
\draw[fill=white] (v0) circle[radius=2pt];
\node[above=3pt,inner sep=0pt,font=\scriptsize] at (v0) {$ 0 $};
\draw[fill=white] (v1) circle[radius=2pt];
\node[above=3pt,inner sep=0pt,font=\scriptsize] at (v1) {$ 1 $};
\draw[fill=white] (v2) circle[radius=2pt];
\node[above=3pt,inner sep=0pt,font=\scriptsize] at (v2) {$ 2 $};
\draw[fill=black] (v3) circle[radius=2pt];
\draw[fill=black] (v4) circle[radius=2pt];
\end{tikzpicture}\\[-0.6ex]
{\small $F^{\tau_2}_{14}$}
\end{minipage}
\begin{minipage}[b]{2.45cm}\centering
\begin{tikzpicture}[x=0.86cm,y=0.86cm,line width=0.6pt]
\path[use as bounding box] (-1.02,-1.04) rectangle (1.02,1.14);
\coordinate (v0) at (-0.75,0.55);
\coordinate (v1) at (0,0.55);
\coordinate (v2) at (0.75,0.55);
\coordinate (v3) at (-0.42,-0.55);
\coordinate (v4) at (0.42,-0.55);
\draw (v1)--(v3);
\draw (v1)--(v4);
\draw (v2)--(v3);
\draw[fill=white] (v0) circle[radius=2pt];
\node[above=3pt,inner sep=0pt,font=\scriptsize] at (v0) {$ 0 $};
\draw[fill=white] (v1) circle[radius=2pt];
\node[above=3pt,inner sep=0pt,font=\scriptsize] at (v1) {$ 1 $};
\draw[fill=white] (v2) circle[radius=2pt];
\node[above=3pt,inner sep=0pt,font=\scriptsize] at (v2) {$ 2 $};
\draw[fill=black] (v3) circle[radius=2pt];
\draw[fill=black] (v4) circle[radius=2pt];
\end{tikzpicture}\\[-0.6ex]
{\small $F^{\tau_2}_{23}$}
\end{minipage}
\begin{minipage}[b]{2.45cm}\centering
\begin{tikzpicture}[x=0.86cm,y=0.86cm,line width=0.6pt]
\path[use as bounding box] (-1.02,-1.04) rectangle (1.02,1.14);
\coordinate (v0) at (-0.75,0.55);
\coordinate (v1) at (0,0.55);
\coordinate (v2) at (0.75,0.55);
\coordinate (v3) at (-0.42,-0.55);
\coordinate (v4) at (0.42,-0.55);
\draw (v0)--(v3);
\draw (v2)--(v3);
\draw (v2)--(v4);
\draw[fill=white] (v0) circle[radius=2pt];
\node[above=3pt,inner sep=0pt,font=\scriptsize] at (v0) {$ 0 $};
\draw[fill=white] (v1) circle[radius=2pt];
\node[above=3pt,inner sep=0pt,font=\scriptsize] at (v1) {$ 1 $};
\draw[fill=white] (v2) circle[radius=2pt];
\node[above=3pt,inner sep=0pt,font=\scriptsize] at (v2) {$ 2 $};
\draw[fill=black] (v3) circle[radius=2pt];
\draw[fill=black] (v4) circle[radius=2pt];
\end{tikzpicture}\\[-0.6ex]
{\small $F^{\tau_2}_{28}$}
\end{minipage}
\begin{minipage}[b]{2.45cm}\centering
\begin{tikzpicture}[x=0.86cm,y=0.86cm,line width=0.6pt]
\path[use as bounding box] (-1.02,-1.04) rectangle (1.02,1.14);
\coordinate (v0) at (-0.75,0.55);
\coordinate (v1) at (0,0.55);
\coordinate (v2) at (0.75,0.55);
\coordinate (v3) at (-0.42,-0.55);
\coordinate (v4) at (0.42,-0.55);
\draw (v1)--(v3);
\draw (v2)--(v3);
\draw (v2)--(v4);
\draw[fill=white] (v0) circle[radius=2pt];
\node[above=3pt,inner sep=0pt,font=\scriptsize] at (v0) {$ 0 $};
\draw[fill=white] (v1) circle[radius=2pt];
\node[above=3pt,inner sep=0pt,font=\scriptsize] at (v1) {$ 1 $};
\draw[fill=white] (v2) circle[radius=2pt];
\node[above=3pt,inner sep=0pt,font=\scriptsize] at (v2) {$ 2 $};
\draw[fill=black] (v3) circle[radius=2pt];
\draw[fill=black] (v4) circle[radius=2pt];
\end{tikzpicture}\\[-0.6ex]
{\small $F^{\tau_2}_{30}$}
\end{minipage}
\caption{The flags in \eqref{eq:examplesquare}.}\label{fig:square}
\end{figure}

\Needspace{10\baselineskip}
\begin{Lem}\label{lem:certificate}
For the four local weights and the $40$ linear combinations specified above, there are rational numbers $\gamma_H>10^{-10}$, $H\in\mathcal H_7$, such that every admissible graph $G$ of order $N\ge7$ satisfies the exact identity
\begin{equation}\label{eq:flagidentity}
r_0\mathcal D(G)-\mathcal E(G)
=\sum_{u}\av{L_u^2}_G
+\sum_{H\in\mathcal H_7}\gamma_H p(H,G).
\end{equation}
In particular,
\begin{equation}\label{eq:countingbound}
r_0\mathcal D(G)-\mathcal E(G)\ge10^{-10}-O(N^{-1}),
\end{equation}
where the implicit constant is independent of $G$.
\end{Lem}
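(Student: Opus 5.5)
The identity \eqref{eq:flagidentity} is linear in the seven-vertex densities, so the plan is to expand every term on both sides in the basis $\{p(H,G):H\in\mathcal H_7\}$ and compare coefficients. First, I claim that for any type $\tau$ of order $k$ and flags $F_i^\tau,F_j^\tau$ of order $\ell=(7+k)/2$, the quantity $\av{F_i^\tau F_j^\tau}_G$ equals $\sum_{H}c^\tau_{ij}(H)\,p(H,G)$. Here $c^\tau_{ij}(H)$ is the number of orderings $(v_0,\ldots,v_6)$ of $V(H)$ for which the two flags produced as in the definition are $F_i^\tau$ and $F_j^\tau$, divided by $7!$. This holds exactly for every $G$ with $N\ge7$. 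The reason is that a uniformly random ordered $7$-tuple is a uniformly random $7$-set, which induces $H$ with probability $p(H,G)$, together with an independent uniform ordering of it. Only admissible $H$ occur because $G$ is admissible. The same argument gives the edge-restricted products $\av{F_i^\sigma F_j^\sigma\mathbf 1_{xy\in E}}_G$, where we count only the orderings with $v_5v_6\in E(H)$; compare \eqref{eq:productexample}--\eqref{eq:edgeexample}.

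Substituting these expansions into \eqref{eq:flagaverage} for each column $u$ of each $U_\tau$, and into \eqref{eq:flagD}--\eqref{eq:flagE} for each column $c$ of each $V_\sigma$ with $w_i=c_i/s$, writes $r_0\mathcal D-\mathcal E-\sum_u\av{L_u^2}_G$ as $\sum_H\gamma_H p(H,G)$. Each $\gamma_H$ is an explicit rational number, and it is independent of $G$ and $N$. This proves \eqref{eq:flagidentity} with these $\gamma_H$. What remains is the finite verification that $\gamma_H>10^{-10}$ for all $89$ graphs $H$. This is done in exact integer arithmetic after clearing the denominators $s^2\cdot 7!\cdot 10^4$. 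To do it, enumerate the $88$ bipartite graphs and $C_7$, enumerate the flags of each type in the code order of Table~\ref{tab:types}, and compute the ordering counts $c^\tau_{ij}(H)$ by running through all $7!$ labellings of each $H$. Then accumulate $\sum_{i,j}u_iu_jc^\tau_{ij}(H)$ for the squares, and the analogous expressions $(w_i^2+w_j^2)/2$ and $(w_i+w_j)^2/2$ for the spectral terms.

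I expect this step to be the main obstacle. It is not conceptually deep, but it depends entirely on the ancillary integer data and on matching the flag indexing and the factor-$\tfrac12$ conventions exactly. One check of the conventions is to reproduce the worked instances \eqref{eq:productexample}, \eqref{eq:edgeexample} and \eqref{eq:examplesquare} before running the full comparison. Two further checks are useful. The coefficients $c^\tau_{ij}(H)$ summed over all $i,j$ should equal the probability that the root tuple has type $\tau$. Also, $\sum_H p(H,G)=1$ should not be needed, since positivity is required coordinatewise.

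Finally, \eqref{eq:countingbound} follows from the identity. By Lemma~\ref{lem:flagsquare}, each of the $40$ fixed combinations $L_u$ satisfies $\av{L_u^2}_G\ge -C_{L_u}/N$ for large $N$. Since $\gamma_H>10^{-10}$ and $\sum_H p(H,G)=1$ for every admissible $G$ with $N\ge7$, the density term is at least $10^{-10}$. The total error is $\sum_u C_{L_u}/N$, and this constant depends only on the certificate, not on $G$.
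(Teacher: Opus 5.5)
Your proposal follows essentially the same route as the paper's proof. You expand the $40$ averaged squares and $r_0\mathcal D-\mathcal E$ in the basis $p(H,G)$, $H\in\mathcal H_7$, by viewing a uniform ordered seven-tuple as a uniform seven-set with a uniform ordering, and take $\gamma_H=r_0d_H-e_H-\sum_uM_{u,H}$. You then check positivity coordinatewise in exact integer arithmetic; the paper's $\Xi_H$ removes the factors $\tfrac12$ using the $x\leftrightarrow y$ symmetry. Finally you combine Lemma~\ref{lem:flagsquare} with $\sum_Hp(H,G)=1$ to get the counting bound. The only step your plan leaves open is the outcome of that finite computation, which the paper reports as $\max_H\Xi_H=-75231473686920$, giving $\gamma_H\ge 208976315797/(14\cdot10^{20})>10^{-10}$.
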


\begin{proof}
We expand all the flag expressions in the common basis $\mathcal H_7$. Index the $40$ columns by $u$ and define
\[
M_{u,H}=\av{L_u^2}_H,\qquad
d_H=\mathcal D(H),\qquad e_H=\mathcal E(H).
\]
The averaging principle, obtained by first choosing a seven-element set and then ordering it, gives
\[
\left(\av{L_u^2}_G\right)_u=M\left(p(H,G)\right)_H,
\quad
\mathcal D(G)=\sum_H d_Hp(H,G),\quad
\mathcal E(G)=\sum_H e_Hp(H,G).
\]
It is therefore enough to make the remainder positive for each $H\in\mathcal H_7$. Multiplying these coordinatewise inequalities by the nonnegative densities $p(H,G)$ and summing will give the required counting inequality. In matrix form, the coefficient comparison is
\begin{equation}\label{eq:coordinate}
\ones^TM<r_0d-e,
\end{equation}
where $d=(d_H)_H$ and $e=(e_H)_H$ are row vectors and the inequality is coordinatewise. We give the exact calculation underlying this comparison.

Write $a=569$ and $b=10000$, and let $A$ denote the adjacency matrix of the graph $H$ under consideration. For an ordering $(v_0,\ldots,v_6)$, let $i,j$ be the indices of the two flags over a given type. An incorrect root type contributes zero. Summing over all orderings and all columns gives the integer
\begin{align}\label{eq:xi}
\Xi_H={}&b\sum_{\tau}\sum_{u\text{ column of }U_\tau}
\sum_{\text{orderings of type }\tau}u_i u_j\notag\\
&+\sum_{\sigma}\sum_{c\text{ column of }V_\sigma}
\sum_{\text{orderings of type }\sigma}
\left[b A_{v_5v_6}(c_i^2+c_i c_j)-a c_i^2\right].
\end{align}
To relate the second line to \eqref{eq:flagD}--\eqref{eq:flagE}, fix a spectral column $c$ and sum over orderings of its root type. Exchanging $x=v_5$ and $y=v_6$ interchanges $i,j$ and preserves $A_{xy}$. Hence
\[
\sum A_{xy}(c_i^2+c_ic_j)=\frac12\sum A_{xy}(c_i+c_j)^2,
\qquad
\sum c_i^2=\frac12\sum(c_i^2+c_j^2).
\]
Thus the spectral contribution agrees with the symmetric flag expressions. Each product of weights contributes a factor $s^{-2}$, averaging over seven-vertex orderings contributes $1/7!$, and multiplication by $b$ clears the denominator of $r_0=a/b$. Therefore
\begin{equation}\label{eq:xiscale}
\Xi_H=5040bs^2\left(\sum_uM_{u,H}+e_H-r_0d_H\right).
\end{equation}
For example, let $H=7K_1$. Only the types $\tau_1,\tau_2,\sigma_1$ contribute. The rows for their empty flags in $U_{\tau_1},U_{\tau_2},V_{\sigma_1}$ are, respectively,
\[
(-34,-5,58,-1588,-1747),\quad
(0,0,0,0,37,0,0),\quad
(4299,10700).
\]
The first two rows have squared norms $5578298$ and $1369$, and $e_H=0$. Hence this coordinate of \eqref{eq:coordinate} can be checked directly:
\[
\sum_u M_{u,H}+e_H-r_0d_H
=\frac{5578298+1369-\frac{569}{10000}(4299^2+10700^2)}{10^{16}}
=-\frac{19864057169}{10^{20}}<0.
\]
All other coordinates follow from the same finite expansion. Table~\ref{tab:coefficients} reports the exact integer comparison for all $89$ graphs, grouped by their numbers of edges. For each edge count $m$, its last column gives $\min_{e(H)=m}(-\Xi_H)$. Every value in this column is positive, and
\begin{equation}\label{eq:certificatemax}
\max_{H\in\mathcal H_7}\Xi_H=-75231473686920.
\end{equation}
It follows that \eqref{eq:coordinate} holds with
\begin{equation}\label{eq:gamma}
\gamma_H=-\frac{\Xi_H}{5040bs^2}
\ge\frac{208976315797}{1400000000000000000000}>10^{-10}.
\end{equation}

\begin{table}[H]
\centering
\caption{Coefficient comparison by edge count.}\label{tab:coefficients}
\begin{tabular}{rrr}
\toprule
Edges $m$ & Number of graphs & Smallest value of $-\Xi_H$\\
\midrule
0 & 1 & 100114848131760\\
1 & 1 & 126589693019520\\
2 & 2 & 98937134956848\\
3 & 4 & 102426248316816\\
4 & 8 & 100265675008752\\
5 & 13 & 75231473686920\\
6 & 19 & 89781404646128\\
7 & 15 & 95843111725544\\
8 & 13 & 86033949817024\\
9 & 7 & 107768130480000\\
10 & 4 & 117695761922616\\
11 & 1 & 1043881219638000\\
12 & 1 & 1846009088942400\\
\bottomrule
\end{tabular}
\end{table}

The graph list can be obtained by enumerating all bipartitions of seven vertices and all subsets of their crossing edges, taking isomorphism classes, and adding $C_7$. The bipartite enumeration contains $103237$ labelled graphs and $88$ isomorphism classes. Each coefficient is then computed from all $5040$ orderings using \eqref{eq:xi}; the expected coefficients are used only as a separate check. This proves \eqref{eq:flagidentity}. Applying Lemma~\ref{lem:flagsquare} to the $40$ squares and using $\sum_Hp(H,G)=1$ proves \eqref{eq:countingbound}.
\end{proof}

Thus flag algebras supply the counting inequality needed after the spectral reduction: the gap $r_0\mathcal D-\mathcal E$ is a sum of averaged squares and an explicitly positive combination of graph densities. Numerical optimisation is used to find the rational coefficients; the displayed identity and the integer calculation establish their validity.


\begin{proof}[Proof of Theorem~\ref{th:main}]
Suppose that an admissible graph $G$ of order $n$ satisfies $\qmin(G)\ge r_0n$. Let $G^{(t)}$ be its balanced complete blow-up, in which each vertex is replaced by an independent set of size $t$ and each edge by a complete bipartite graph. The graph $G^{(t)}$ is still admissible: an odd cycle of length at most five in it would project to an odd closed walk of that length in $G$, and hence to a forbidden odd cycle.

With vertices ordered by their replacement sets,
\[
Q(G^{(t)})=tD(G)\otimes I_t+A(G)\otimes J_t,
\]
where $I_t$ is the identity matrix and $J_t$ is the all-ones matrix. On vectors constant on each replacement set, this matrix acts as $tQ(G)$. On the subspace supported on the replacement set of $v$ with coordinate sum zero, it acts as multiplication by $td(v)$. These invariant subspaces give all its eigenvalues. Since $\qmin(G)\le d(v)$ for every $v$, we obtain
\begin{equation}\label{eq:blowup}
\qmin(G^{(t)})=t\qmin(G)\ge r_0tn.
\end{equation}
Write $N=tn$. Apply \eqref{eq:spectralreduction} to $G^{(t)}$. Because $\mathcal E\ge0$,
\[
r_0\mathcal D(G^{(t)})-\mathcal E(G^{(t)})
\le\frac{5\mathcal D(G^{(t)})-6\mathcal E(G^{(t)})}{N}
\le\frac{5\mathcal D(G^{(t)})}{N}=O(N^{-1}).
\]
The final bound holds because the four weight vectors are fixed. In contrast, Lemma~\ref{lem:certificate} gives
\[
r_0\mathcal D(G^{(t)})-\mathcal E(G^{(t)})
\ge10^{-10}-O(N^{-1}).
\]
Letting $t$ tend to infinity gives a contradiction. This proves the strict finite-graph inequality in \eqref{eq:main}.
\end{proof}

\section{Concluding remarks}\label{sec:conclusion}

For odd $g\ge7$, define
\[
\beta_g=\sup\left\{\frac{\qmin(G)}{|V(G)|}: |V(G)|\ge1,\ G\text{ has odd girth at least }g\right\}.
\]
Theorem~\ref{th:main} gives $\beta_g\le569/10000$ for every such $g$. In fact, the uniform positive remainder in the certificate gives a strict bound for the supremum as well. Set $\gamma_* = \min_H\gamma_H>0$ and $B=\sum_w\max_iw_i^2>0$, where the sum is over the four local weight vectors. Then $\mathcal D(G)\le B$. For a fixed admissible graph $G$, put $r=\qmin(G)/|V(G)|<r_0$. Applying the spectral reduction and the certificate to its balanced blow-ups gives
\[
(r_0-r)\mathcal D(G^{(t)})\ge\gamma_*-O((t|V(G)|)^{-1}).
\]
Since $\mathcal D(G^{(t)})\le B$, letting $t\to\infty$ yields $r\le r_0-\gamma_*/B$. Consequently $\beta_g\le r_0-\gamma_*/B<569/10000$ for every odd $g\ge7$. We retain the simpler coefficient $569/10000$ in the statement of the main theorem.

The folded $7$-cube gives the lower bound $\beta_7\ge1/32$; it was already used for the adjacency spectral ratio by Abiad, Taranchuk and van Veluw \cite{atv}. One model has vertex set $\FF_2^6$ and connection set consisting of the six coordinate vectors and the all-ones vector. No nonempty odd subset of fewer than seven of these generators sums to zero, and the sum of all seven is zero. Thus the graph has odd girth seven. For $a\in\FF_2^6$, the character $\chi_a(x)=(-1)^{a\cdot x}$ is an adjacency eigenvector, with eigenvalue equal to the sum of its values on the seven generators. If $a$ has weight $j$, this sum is
\[
\sum_{i=1}^6(-1)^{a_i}+(-1)^{a_1+\cdots+a_6}
=6-2j+(-1)^j.
\]
The minimum over $0\le j\le6$ is $-5$. The graph is $7$-regular on $64$ vertices, so its least signless Laplacian eigenvalue is $7-5=2$. Equation~\eqref{eq:blowup} gives arbitrarily large examples with the same normalised value. Hence
\begin{equation}\label{eq:gap}
\frac1{32}\le\beta_7<\frac{569}{10000}.
\end{equation}
Determining $\beta_7$ remains open; no optimality claim is made for the upper coefficient in \eqref{eq:gap}.

The local construction in Lemma~\ref{lem:local} extends to any fixed number of roots. With $k$ roots, the same neighbourhood partition gives spectral constraints on $(k+2)$-vertex densities. Combining these with larger flag Gram matrices provides a systematic way to seek stronger bounds. Additional forbidden odd cycles can be imposed by restricting the admissible graphs. Whether these larger systems yield substantially stronger bounds, and whether their certificates admit shorter descriptions, are natural questions.

The complete integer coefficients of the $40$ flag squares and four local test vectors, together with an independent verifier, are supplied in the single SageMath ancillary file \texttt{verify\_certificates.sage} accompanying the arXiv version of this manuscript. The file specifies all labels and averages, enumerates the admissible graphs and flags, and verifies the coefficient comparison \eqref{eq:coordinate} by recomputing \eqref{eq:xi} in exact integer arithmetic. It requires SageMath and the Python standard library, and uses no numerical optimiser, external data files or network connection.

\end{document}